\numberwithin{equation}{section}
\newtheorem*{Theorem}{Main Theorem}
\newtheorem{theorem}{Theorem}[section]
\newtheorem{lemma}[theorem]{Lemma}
\theoremstyle{definition}
\newtheorem{definition}[theorem]{Definition}
\newtheorem{remark}[theorem]{Remark}
\DeclareMathOperator{\Eff}{Eff}
\newcommand{\DD}{\displaystyle}
\newcommand{\Moduli}{\overline{\mathcal{M}}}
\newcommand{\moduli}{\mathcal{M}}
\newcommand{\Hyp}{\overline{\mathcal{H}}}
\newcommand{\hyp}{\mathcal{H}}
\newcommand{\mathds}[1]{\mbox{\bf #1}}
\title{Hyperelliptic classes are rigid and extremal in genus two}
\author{Vance Blankers}
\address{Department of Mathematics, Northeastern University, Boston, Massachusetts 02115-5005}
\email{blankersv@northeastern.edu}
\begin{document}



\maketitle

\begin{prelims}

\DisplayAbstractInEnglish

\bigskip

\DisplayKeyWords

\medskip

\DisplayMSCclass

\bigskip

\languagesection{Fran\c{c}ais}

\bigskip

\DisplayTitleInFrench

\medskip

\DisplayAbstractInFrench

\end{prelims}


\newpage

\setcounter{tocdepth}{2}

\tableofcontents


\section*{Introduction}
\label{sec-intro}

Every smooth curve of genus two admits a unique degree-two \emph{hyperelliptic} map to $\mathbb{P}^1$. The Riemann-Hurwitz formula forces such a map to have six ramification points called \emph{Weierstrass points}; each non-Weierstrass point $p$ exists as part of a \emph{conjugate pair} $(p,p')$ such that the images of $p$ and $p'$ agree under the hyperelliptic map.

The locus of curves of genus two with $\ell$ marked Weierstrass points is codimension $\ell$ inside the moduli space $\moduli_{2,\ell}$, and in \cite{chentarasca} it is shown that the class of the closure of this locus is rigid and extremal in the cone of effective classes of codimension $\ell$. Our main theorem extends their result to $\hyp_{2,\ell,2m,n}\subseteq \moduli_{2,\ell+2m+n}$, 
the locus of genus-two curves with $\ell$ marked Weierstrass points, $m$ marked conjugate pairs, and $n$ free marked points (see Definition \ref{def-hyp}).

\begin{Theorem}
For $\ell,m,n\geq 0$, the class $\Hyp_{2,\ell,2m,n}$, if non-empty, is rigid and extremal in the cone of effective classes of codimension $\ell+m$ in $\Moduli_{2,\ell+2m+n}$.
\end{Theorem}

In \cite{chencoskun2015}, the authors show that the effective cone of codimension-two classes of $\Moduli_{2,n}$ has infinitely many extremal cycles for every $n$. Here we pursue a perpendicular conclusion: although in genus two $\ell \leq 6$, the number of conjugate pairs and number of free marked points are unbounded, so that the classes $\Hyp_{2,\ell,2m,n}$ form an infinite family of rigid and extremal cycles in arbitrarily-high codimension. Moreover, the induction technique used to prove the main result is genus-agnostic, pointing towards a natural extension of the main theorem to higher genus given a small handful of low-codimension cases.

When $\ell + m \geq 3$, our induction argument (Theorem \ref{thm-main}) is a generalization of that used in \cite[Theorem 4]{chentarasca} to include conjugate pairs and free points; it relies on pushing forward an effective decomposition of one hyperelliptic class onto other hyperelliptic classes and showing that the only term of the decomposition to survive all pushforwards is the original class itself. This process is straightforward when there are at least three codimension-one conditions available to forget; however, when $\ell + m = 2$, and in particular when $\ell = 2$ and $m = 0$, more care must be taken. The technique used in \cite[Theorem 5]{chentarasca} to overcome this problematic subcase relies on an explicit expression for $\left[\Hyp_{2,2,0,0}\right]$ which becomes cumbersome when a non-zero number of free marked points are allowed. Although adding free marked points can be described via pullback, pullback does not preserve rigidity and extremality in general, so we introduce an intersection-theoretic calculation using tautological $\omega$-classes to handle this case instead.

The base case of the induction (Theorem \ref{thm-base}) is shown via a criterion (Lemma \ref{lem-divisor}) given by  \cite{chencoskun2014} for rigidity and extremality for divisors; it amounts to an additional pair of intersection calculations. We utilize the theory of moduli spaces of admissible covers to construct a suitable curve class for the latter intersection, a technique which generalizes that used in \cite{rulla01} for the class of $\Hyp_{2,1,0,0}$.

\subsection*{Structure of the paper.} We begin in \textsection \ref{sec-prelim} with some background on $\Moduli_{g,n}$ and cones of effective cycles. This section also contains the important Lemma \ref{lem-divisor} upon which Theorem \ref{thm-base} depends. In \textsection \ref{sec-main}, we prove Theorem \ref{thm-base}, which establishes the base case for the induction argument of our main result, Theorem \ref{thm-main}. Finally, we conclude in \textsection \ref{sec-highergenus} with a discussion of extending these techniques for $g\geq 3$ and possible connections to a CohFT-like structure.

\subsection*{Acknowledgments.} The author wishes to thank Nicola Tarasca, who was kind enough to review an early version of the proof of the main theorem and offer his advice. The author is also greatly indebted to Renzo Cavalieri for his direction and support.

\section{Preliminaries on $\Moduli_{g,n}$ and effective cycles}
\label{sec-prelim}

\subsection*{Moduli spaces of curves, hyperelliptic curves, and admissible covers.} We work throughout in $\Moduli_{g,n}$, the moduli space of isomorphism classes of stable genus $g$ curves with $n$ (ordered) marked points. If $2g-2+n > 0$ this space is a smooth Deligne-Mumford stack of dimension $3g-3+n$. We denote points of $\Moduli_{g,n}$ by $[C; p_1,\dots,p_n]$ with $p_1,\dots,p_n\in C$ smooth marked points. For fixed $g$, we may vary $n$ to obtain a family of moduli spaces related by \emph{forgetful morphisms}: for each $1\leq i \leq n$, the map $\pi_{p_i}:\Moduli_{g,n}\to\Moduli_{g,n-1}$ forgets the $i$th marked point and stabilizes the curve if necessary. The maps $\rho_{p_i}:\Moduli_{g,n} \to \Moduli_{g,\{p_i\}}$ are the \emph{rememberful morphisms} which are the composition of all possible forgetful morphisms other than $\pi_{p_i}$. 

Due to the complexity of the full Chow ring of $\Moduli_{g,n}$, the \emph{tautological ring} $R^*(\Moduli_{g,n})$ is often considered instead \cite{faberpandharipande} (for both rings we assume rational coefficients). Among other classes, this ring contains the classes of the boundary strata, as well as all $\psi$- and $\lambda$-classes. For $1\leq i \leq n$ the class $\psi_{p_i}$ is defined to be the first Chern class of the line bundle on $\Moduli_{g,n}$ whose fiber over a given isomorphism class of curves is the cotangent line bundle at the $i$th marked point of the curve; $\lambda_1$ is the first Chern class of the Hodge bundle. The tautological ring also includes pullbacks of all $\psi$- and $\lambda$-classes, including the $\omega$\emph{-classes}, sometimes called \emph{stable} $\psi$\emph{-classes}. The class $\omega_{p_i}$ is defined on $\Moduli_{g,n}$ for $g,n\geq 1$ as the pullback of $\psi_{p_i}$ along $\rho_{p_i}$. Several other notable cycles are known to be tautological, including the hyperelliptic classes considered below (\cite{faberpandharipande}).

\emph{Hyperelliptic curves} are those which admit a degree-two map to $\mathbb{P}^1$. The Riemann-Hurwitz formula implies that a hyperelliptic curve of genus $g$ contains $2g+2$ Weierstrass points which ramify over the branch locus in $\mathbb{P}^1$. For a fixed genus, specifying the branch locus allows one to recover the complex structure of the hyperelliptic curve and hence the hyperelliptic map. Thus for $g\geq 2$, the codimension of the locus of hyperelliptic curves in $\Moduli_{g,n}$ is $g-2$. In this context, requiring that a marked point be Weierstrass (resp. two marked points be a conjugate pair) is a codimension-one condition for genus at least two.

We briefly use the theory of \emph{moduli spaces of admissible covers} to construct a curve in $\Moduli_{2,n}$ in Theorem \ref{thm-base}. These spaces are particularly nice compactifications of Hurwitz schemes. For a thorough introduction, the standard references are \cite{harrismumford} and \cite{acv}. For a more hands-on approach in the same vein as our usage, see as well \cite{cavalierihodgeint}.

\subsection*{Notation.} We use the following notation for boundary strata on $\Moduli_{g,n}$
; all cycles classes are given as stack fundamental classes. For $g\geq 1$, the divisor class of the closure of the locus of irreducible nodal curves is denoted by $\delta_{irr}$. By $\delta_{h,P}$ we mean the class of the divisor whose general element has one component of genus $h$ attached to another component of genus $g-h$, with marked points $P$ on the genus $h$ component and marked points $\{p_1,\dots,p_n\}\backslash P$ on the other. By convention $\delta_{h,P} = 0$ for unstable choices of $h$ and $P$.

Restrict now to the case of $g=2$. We use $W_{2,P}$ to denote the codimension-two class of the stratum whose general element agrees with that of $\delta_{2,P}$, with the additional requirement that the node be a Weierstrass point. We denote by $\gamma_{1,P}$ the class of the closure of the locus of curves whose general element has a genus-one component containing the marked points $P$ meeting in two points conjugate under a hyperelliptic map a rational component with marked points $\{p_1,\dots,p_n\}\backslash P$ (see Figure \ref{wfigure}).

\begin{figure}[t]
\begin{tikzpicture}


\draw[very thick] (0,0) ellipse (4cm and 1.5cm);

\draw[very thick] (5,0) circle (1);
\node at (3.7,0) {$w$};

\draw[very thick] (-2.3,-0.15) .. controls (-2.1,0.2) and (-0.9,0.2) .. (-0.7,-0.15); 
\draw[very thick] (-2.5,0) .. controls (-2.2,-0.4) and (-0.8,-0.4) .. (-0.5,0); 

\draw[very thick] (2.3,-0.15) .. controls (2.1,0.2) and (0.9,0.2) .. (0.7,-0.15); 
\draw[very thick] (2.5,0) .. controls (2.2,-0.4) and (0.8,-0.4) .. (0.5,0); 

\fill (-1.8,0.7) circle (0.10); \node at (-1.4,0.7) {$p_1$};
\fill (-1.4,-0.7) circle (0.10); \node at (-1,-0.7) {$p_2$};
\fill (1.1,-0.7) circle (0.10); \node at (1.5,-0.7) {$p_3$};
\fill (5.05,0.5) circle (0.10); \node at (5.45,0.5) {$p_4$};
\fill (5.05,-0.5) circle (0.10); \node at (5.45,-0.5) {$p_5$};


\draw[very thick] (0,-4.5) ellipse (4cm and 1.5cm);

\draw[very thick] (3.2,-3.6) .. controls (5.4,-3.8) and (5.4,-5.2) .. (3.2,-5.4); 
\draw[very thick] (3.2,-3.6) .. controls (7.0,-2.6) and (7.0,-6.4) .. (3.2,-5.4);
\node at (3.2,-3.4) {$+$}; 
\node at (3.2,-5.8) {$-$}; 

\draw[very thick] (-0.8,-4.65) .. controls (-0.6,-4.3) and (0.6,-4.3) .. (0.8,-4.65); 
\draw[very thick] (-1,-4.5) .. controls (-0.7,-4.9) and (0.7,-4.9) .. (1,-4.5); 

\fill (-1.8,-3.8) circle (0.10); \node at (-1.4,-3.8) {$p_1$};
\fill (-1.4,-5.2) circle (0.10); \node at (-1,-5.2) {$p_2$};
\fill (1.1,-5.2) circle (0.10); \node at (1.5,-5.2) {$p_3$};
\fill (5.05,-4) circle (0.10); \node at (5.45,-4) {$p_4$};
\fill (5.05,-5) circle (0.10); \node at (5.45,-5) {$p_5$};


\draw[very thick]  (9,0) circle (0.30);
\node at (9,0) {$2$};
\draw[very thick] (8.79,0.21) -- (8.5,0.5); \node at (8.2,0.5) {$p_1$};
\draw[very thick] (8.7,0) -- (8.3,0); \node at (8,0) {$p_2$};
\draw[very thick] (8.79,-0.21) -- (8.5,-0.5); \node at (8.2,-0.5) {$p_3$};
\draw[very thick] (9.30,0) -- (10.5,0);

\fill (10.5,0) circle (0.10);
\draw[very thick] (10.5,0) -- (11,0.5); \node at (11.3,0.5) {$p_4$};
\draw[very thick] (10.5,0) -- (11,-0.5); \node at (11.4,-0.5) {$p_5$};

\node at (9.8,0.2) {$w$};


\draw[very thick]  (9,-4.5) circle (0.30);
\node at (9,-4.5) {$1$};
\draw[very thick] (8.79,-4.29) -- (8.5,-4); \node at (8.2,-4) {$p_1$};
\draw[very thick] (8.7,-4.5) -- (8.3,-4.5); \node at (8,-4.5) {$p_2$};
\draw[very thick] (8.79,-4.71) -- (8.5,-5); \node at (8.2,-5) {$p_3$};

\draw[very thick] (9.2,-4.3) .. controls (9.6,-3.9) and (10.2,-3.9) .. (10.5,-4.5); 
\draw[very thick] (9.2,-4.7) .. controls (9.6,-5.1) and (10.2,-5.1) .. (10.5,-4.5);

\fill (10.5,-4.5) circle (0.10);
\draw[very thick] (10.5,-4.5) -- (11,-4); \node at (11.3,-4) {$p_4$};
\draw[very thick] (10.5,-4.5) -- (11,-5); \node at (11.4,-5) {$p_5$};

\node at (9.8,-3.8) {$+$};
\node at (9.8,-5.2) {$-$};

\end{tikzpicture}
\caption{On the left-hand side, the topological pictures of the general elements of $W_{2,P}$ (top) and $\gamma_{1,P}$ (bottom) in $\overline{\mathcal{M}}_{2,5}$ with $P = \{p_1,p_2,p_3\}$. On the right-hand side, the corresponding dual graphs.}
\label{wfigure}
\end{figure}
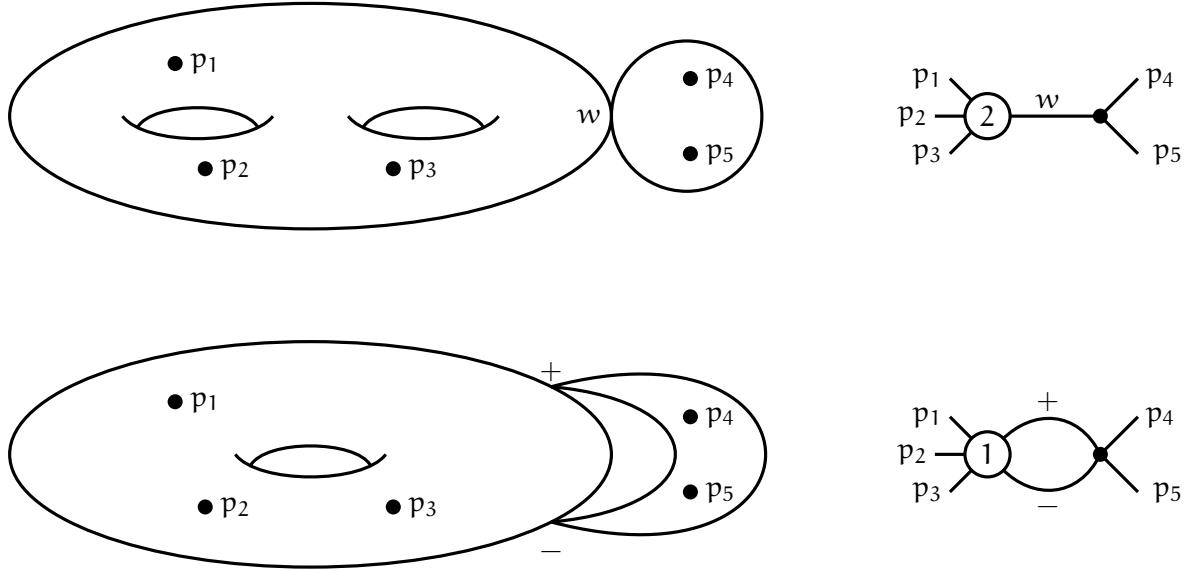

The space $\overline{Adm}_{2\xrightarrow{2} 0,t_1,\dots,t_6,u_{1\pm},\dots,u_{n\pm}}$ is the moduli space of degree-two admissible covers of genus two with marked ramification points (Weierstrass points) $t_i$ and marked pairs of points (conjugate pairs) $u_{j+}$ and $u_{j-}$. This space comes with a finite map $c$ to $\Moduli_{0,\{t_1,\dots,t_6,u_{1},\dots,u_{n}\}}$ which forgets the cover and remembers only the base curve and its marked points, which are the images of the markings on the source. It comes also with a degree $2^n$ map $s$ to $\Moduli_{2,1+n}$ which forgets the base curve and all $u_{j+}$ and $t_i$ other than $t_1$ and remembers the (stabilization of the) cover.

\begin{figure}[!ht]
\begin{tikzpicture}

\draw[very thick]  (0,0) circle (0.30);
\node at (0,0) {$1$};
\draw[very thick] (-0.21,0.21) -- (-0.5,0.5); \node at (-0.8,0.5) {$t_1$};
\draw[very thick] (-0.30,0) -- (-0.7,0); \node at (-1,0) {$t_2$};
\draw[very thick] (-0.21,-0.21) -- (-0.5,-0.5); \node at (-0.8,-0.5) {$t_6$};
\draw[very thick] (0.30,0) -- (1.5,0);

\fill (1.5,0) circle (0.10);
\draw[very thick] (1.5,0) -- (1,0.5); \node at (0.8,0.6) {$t_3$};

\draw[very thick] (3.5,0) .. controls (3.2,0.9) and (1.8,0.9) .. (1.5,0); 
\draw[very thick] (3.5,0) .. controls (3.2,-0.9) and (1.8,-0.9) .. (1.5,0);

\fill (3.5,0) circle (0.10);
\draw[very thick] (3.5,0) -- (4,0.5); \node at (4.2,0.6) {$t_4$};
\draw[very thick] (3.5,0) -- (4.7,0);

\fill (4.7,0) circle (0.10);
\draw[very thick] (4.7,0) -- (5.2,0.5); \node at (5.5,0.5) {$t_5$};
\draw[very thick] (4.7,0) -- (5.4,0); \node at (5.8,0) {$u_{1+}$};
\draw[very thick] (4.7,0) -- (5.2,-0.5); \node at (5.6,-0.5) {$u_{1-}$};

\end{tikzpicture}

\begin{tikzpicture}

\draw[very thick] (0,0) -- (0,-1.5);
\draw[very thick] (0,-1.5) -- (-0.15,-1.3);
\draw[very thick] (0,-1.5) -- (0.15,-1.3);

\end{tikzpicture}

\begin{tikzpicture}

\fill (.3,0) circle (0.10);
\draw[very thick] (.3,0) -- (-0.2,0.5); \node at (-0.5,0.5) {$t_1$};
\draw[very thick] (.3,0) -- (-0.4,0); \node at (-.7,0) {$t_2$};
\draw[very thick] (.3,0) -- (-0.2,-0.5); \node at (-0.5,-0.5) {$t_6$};
\draw[very thick] (.3,0) -- (1.5,0);

\fill (1.5,0) circle (0.10);
\draw[very thick] (1.5,0) -- (1.5,0.5); \node at (1.5,0.7) {$t_3$};
\draw[very thick] (1.5,0) -- (3.5,0);

\fill (3.5,0) circle (0.10);
\draw[very thick] (3.5,0) -- (3.5,0.5); \node at (3.5,0.7) {$t_4$};
\draw[very thick] (3.5,0) -- (4.7,0);

\fill (4.7,0) circle (0.10);
\draw[very thick] (4.7,0) -- (5.2,0.5); \node at (5.5,0.5) {$t_5$};
\draw[very thick] (4.7,0) -- (5.2,-0.5); \node at (5.6,-0.5) {$u_{1}$};

\end{tikzpicture}
\caption{An admissible cover in $\overline{Adm}_{2\xrightarrow{2} 0,t_1,\dots,t_6,u_{1\pm}}$ represented via dual graphs. In degree two the topological type of the cover is uniquely recoverable from the dual graph presentation.}
\label{admfigure}
\end{figure}
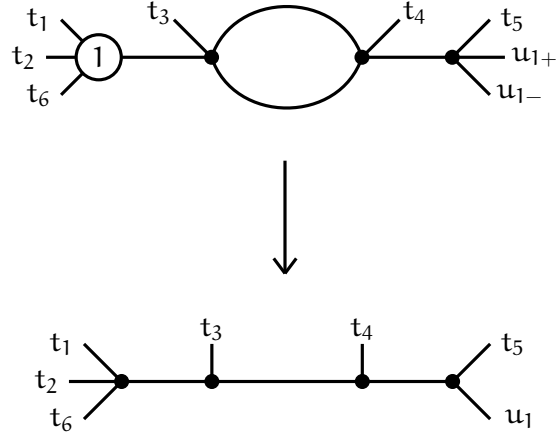

\subsection*{$\omega$-class lemmas.} The following two lemmas concerning basic properties of $\omega$-classes prove useful in the last subcase of Theorem \ref{thm-main}. The first is a unique feature of these classes, and the second is the $\omega$-class version of the dilaton equation.

\begin{lemma}
\label{lem-fallomega}
Let $g\geq 1$, $n\geq 2$, and $P\subset \{p_1,\dots,p_n\}$ such that $|P|\leq n-2$. Then for any $p_i,p_j\not\in P$
\begin{align*}
\omega_{p_i}\cdot \delta_{g,P} = \omega_{p_j}\cdot \delta_{g,P}
\end{align*}
on $\Moduli_{g,n}$.
\end{lemma}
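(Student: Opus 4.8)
The plan is to compute both sides by restricting to the boundary divisor and comparing the resulting classes. Let $\Delta = \Moduli_{g,P\cup\{\star\}}\times \Moduli_{0,P^c\cup\{\star\}}$, where $P^c=\{p_1,\dots,p_n\}\setminus P$, and let $\iota\colon\Delta\to\Moduli_{g,n}$ be the gluing map identifying the two copies of $\star$, so that $\delta_{g,P}=\iota_*[\Delta]$. By the projection formula it suffices to prove the identity $\iota^*\omega_{p_i} = \iota^*\omega_{p_j}$ in $A^*(\Delta)$, since then $\omega_{p_i}\cdot\delta_{g,P} = \iota_*\iota^*\omega_{p_i} = \iota_*\iota^*\omega_{p_j} = \omega_{p_j}\cdot\delta_{g,P}$ (any constant coming from a gluing automorphism is harmless, as it would scale both sides equally).

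The key observation is that the rememberful morphisms $\rho_{p_i}$ and $\rho_{p_j}$ become equal after restriction to $\Delta$. Since $\omega_{p_i} = \rho_{p_i}^*\psi_{p_i}$, we have $\iota^*\omega_{p_i} = (\rho_{p_i}\circ\iota)^*\psi_{p_i}$, so I must understand the composite $\rho_{p_i}\circ\iota\colon \Delta \to \Moduli_{g,\{p_i\}}$. As $p_i,p_j\in P^c$, both lie on the rational component, and $\rho_{p_i}$ forgets every marked point except $p_i$: on the genus-$g$ factor all of $P$ is forgotten, leaving only the node $\star$, which keeps that component stable because $g\geq 1$; on the rational factor everything but $p_i$ is forgotten, leaving the two special points $p_i$ and $\star$. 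Thus the rational component is unstable and gets contracted, carrying $p_i$ onto the point $\star$ of the genus-$g$ component. Hence $\rho_{p_i}\circ\iota$ factors as the projection of $\Delta$ onto its first factor $\Moduli_{g,P\cup\{\star\}}$, followed by the forgetful map $\Moduli_{g,P\cup\{\star\}} \to \Moduli_{g,\{\star\}}$ forgetting all of $P$, with the surviving marked point $\star$ then relabeled $p_i$.

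Crucially, this description is independent of the chosen point of $P^c$: running the identical analysis for $p_j$ yields the same factorization, differing only in the label attached to the surviving point. Under the canonical identification $\Moduli_{g,\{p_i\}} \cong \Moduli_{g,\{\star\}} \cong \Moduli_{g,\{p_j\}}$, the classes $\psi_{p_i}$ and $\psi_{p_j}$ both pull back from the single class $\psi_\star$ on $\Moduli_{g,1}$. Therefore $(\rho_{p_i}\circ\iota)^*\psi_{p_i} = (\rho_{p_j}\circ\iota)^*\psi_{p_j}$, which is precisely $\iota^*\omega_{p_i} = \iota^*\omega_{p_j}$, and the projection-formula reduction above then closes the argument.

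The hypothesis $|P|\leq n-2$ plays a twofold role: it ensures $|P^c|\geq 2$, so that $\delta_{g,P}$ is a genuine nonzero boundary divisor whose rational component is stable, and so that two distinct points $p_i,p_j\notin P$ are available to compare. The only delicate point is the stabilization bookkeeping—checking that the rational tail is contracted while the genus-$g$ component survives, and that both chosen points are swept onto the attaching node—so this is where I would take the most care, though I expect no genuine obstacle to arise.
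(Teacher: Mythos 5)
Your proof is correct, but it takes a different route from the paper, whose entire proof is a citation: the lemma is deduced from Lemma 1.9 of \cite{bcomega}, a general structural result about $\omega$-classes. What you have done is give a self-contained argument for exactly the mechanism that the cited lemma encapsulates: writing $\delta_{g,P}=\iota_*[\Delta]$ with $\Delta = \Moduli_{g,P\cup\{\star\}}\times\Moduli_{0,P^c\cup\{\star\}}$, observing that for any $p_i\in P^c$ the rememberful morphism $\rho_{p_i}$ contracts the rational tail and sweeps $p_i$ onto the attaching node, so that $\rho_{p_i}\circ\iota$ factors through the projection to $\Moduli_{g,P\cup\{\star\}}$ followed by forgetting $P$ --- a factorization manifestly independent of the choice of $p_i\in P^c$ --- and then concluding $\iota^*\omega_{p_i}=\iota^*\omega_{p_j}$ and applying the projection formula. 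All the steps check out: the hypothesis $|P|\le n-2$ guarantees the rational tail is stable (so $\delta_{g,P}\neq 0$) and that two distinct points of $P^c$ exist; $g\ge 1$ keeps the genus-$g$ side stable after forgetting $P$; and your hedge about a possible gluing factor is not even needed, since the two branches of the node have different genera, so $\iota$ has degree one onto $\delta_{g,P}$ when one works with stack fundamental classes, as the paper does. The trade-off is the usual one: the paper's citation is shorter and leans on a systematic theory of $\omega$-classes developed elsewhere, while your argument makes the lemma self-contained and exposes the geometry (the ``falling'' of $\omega_{p_i}$ onto the $\psi$-class at the node) that the citation hides.
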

\begin{proof}
This follows immediately from Lemma 1.9 in \cite{bcomega}.
\end{proof}

\begin{lemma}
\label{lem-omegadil}
Let $g,n\geq 2$. Then on $\Moduli_{g,n}$,
\begin{align*}
\pi_{p_i*}\omega_{p_j} = 2g - 2
\end{align*}
if $i=j$, and $0$ otherwise.
\end{lemma}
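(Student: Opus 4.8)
The plan is to split into the two cases $i\neq j$ and $i=j$, the former being immediate and the latter carrying the content of the lemma. For $i \neq j$, I would observe that the rememberful map $\rho_{p_j}$ forgets every marked point except $p_j$, and in particular forgets $p_i$; hence it factors as $\rho_{p_j} = \bar\rho_{p_j}\circ \pi_{p_i}$, where $\bar\rho_{p_j}\colon \Moduli_{g,n-1}\to\Moduli_{g,\{p_j\}}$ is the corresponding rememberful map on the smaller space. Consequently $\omega_{p_j}=\rho_{p_j}^{\ast}\psi_{p_j}=\pi_{p_i}^{\ast}\!\left(\bar\rho_{p_j}^{\ast}\psi_{p_j}\right)$ is a pullback along $\pi_{p_i}$, and the projection formula gives $\pi_{p_i\ast}\omega_{p_j}=\left(\bar\rho_{p_j}^{\ast}\psi_{p_j}\right)\cdot\pi_{p_i\ast}(1)$. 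Since the fibers of $\pi_{p_i}$ are one-dimensional, $\pi_{p_i\ast}(1)=0$ for dimension reasons (the class $\pi_{p_i\ast}[\Moduli_{g,n}]$ lives in dimension exceeding $\dim\Moduli_{g,n-1}$), so $\pi_{p_i\ast}\omega_{p_j}=0$.

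For $i=j$, the strategy is to compare $\omega_{p_i}$ with the honest cotangent class $\psi_{p_i}$ on $\Moduli_{g,n}$ and then invoke the ordinary dilaton equation. Writing $\rho_{p_i}$ as a composition of the forgetful maps dropping the points other than $p_i$ one at a time, and iterating the standard comparison $\psi = \pi^{\ast}\psi + (\text{boundary divisor})$ at each stage, I expect to obtain
$$\omega_{p_i} = \psi_{p_i} - \sum_{\substack{P\ni p_i\\ |P|\geq 2}}\delta_{0,P},$$
the sum running over boundary divisors whose rational tail carries exactly the marked points in $P$. Applying $\pi_{p_i\ast}$ and using the dilaton/$\kappa_0$ relation $\pi_{p_i\ast}\psi_{p_i}=\kappa_0 = 2g-2+(n-1)$ on $\Moduli_{g,n-1}$, it then remains to evaluate $\pi_{p_i\ast}\delta_{0,P}$ for each $P$.

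This is the crux. For $|P|\geq 3$, forgetting $p_i$ leaves the rational tail stable, so $\delta_{0,P}$ maps onto the proper boundary divisor $\delta_{0,P\setminus\{p_i\}}$ of $\Moduli_{g,n-1}$; as the image has strictly smaller dimension, $\pi_{p_i\ast}\delta_{0,P}=0$. For $|P|=2$, say $P=\{p_i,p_k\}$, forgetting $p_i$ destabilizes the tail, which contracts and deposits $p_k$ at the node; here the restriction of $\pi_{p_i}$ to $\delta_{0,\{p_i,p_k\}}$ is an isomorphism onto $\Moduli_{g,n-1}$, giving $\pi_{p_i\ast}\delta_{0,\{p_i,p_k\}}=1$. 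Since there are exactly $n-1$ such divisors, the corrections contribute $-(n-1)$, and
$$\pi_{p_i\ast}\omega_{p_i} = \bigl(2g-2+(n-1)\bigr) - (n-1) = 2g-2.$$

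The main obstacle I anticipate lies entirely in the $i=j$ case: establishing the comparison formula between $\omega_{p_i}$ and $\psi_{p_i}$ (that the accumulated corrections are precisely the $\delta_{0,P}$ with $p_i\in P$, each with multiplicity one) and then correctly sorting which boundary pushforwards vanish versus survive. The projection-formula argument for $i\neq j$ and the appeal to the dilaton equation are routine by comparison.
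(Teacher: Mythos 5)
Your argument is correct, but your $i=j$ case follows a genuinely different route from the paper's. (For $i\neq j$ the two arguments coincide: the paper's one-line computation $\pi_{p_i*}\omega_{p_j}=\pi_{p_i*}\pi_{p_i}^{*}\omega_{p_j}=0$ is exactly your factorization of $\rho_{p_j}$ through $\pi_{p_i}$ combined with the projection formula.) For $i=j$ the paper never introduces boundary terms: it uses the commuting square relating $\rho_{p_i}$, $\pi_{p_i}$, and the map $\pi\colon\Moduli_{g,P\backslash\{p_i\}}\to\Moduli_{g}$ forgetting all points, together with the push-pull identity $\pi_{p_i*}\rho_{p_i}^{*}\psi_{p_i}=\pi^{*}\pi_{p_i*}\psi_{p_i}$, so the result drops out of the dilaton equation on $\Moduli_{g,\{p_i\}}$, where it reads $2g-2$ with no marked-point correction at all. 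You instead work entirely on $\Moduli_{g,n}$: you expand $\omega_{p_i}=\psi_{p_i}-\sum_{P\ni p_i,\,|P|\geq 2}\delta_{0,P}$ (this comparison formula is correct and standard; it follows by exactly the induction you sketch, since pulling back a rational-tail divisor under a forgetful map yields the two divisors where the new point does or does not join the tail, each with multiplicity one), apply the dilaton equation in the form $\pi_{p_i*}\psi_{p_i}=2g-2+(n-1)$, and then evaluate the boundary pushforwards, which you sort correctly: the terms with $|P|\geq 3$ die for fiber-dimension reasons, while each of the $n-1$ divisors $\delta_{0,\{p_i,p_k\}}$ maps isomorphically onto $\Moduli_{g,n-1}$ and contributes $1$, so $(2g-2+(n-1))-(n-1)=2g-2$. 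The paper's route buys brevity and avoids all boundary combinatorics, at the price of invoking compatibility of pushforward and pullback around a square that is not literally Cartesian; your route is longer but completely explicit, makes visible exactly how the $n-1$ marked-point contributions cancel, and relies only on the standard $\psi$-class comparison and elementary pushforward computations.
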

\begin{proof}
Let $P = \{p_1,\dots,p_n\}$. When $i=j$, the pushforward reduces to the usual dilaton equation for $\psi_{p_i}$ on $\Moduli_{g,\{p_i\}}$. If $\pi$ is the morphism which forgets all marked points, the diagram
\begin{center}
\begin{tikzcd}[row sep=1cm, column sep=1cm] 
\Moduli_{g,n} \arrow[d, "\rho_{p_i}"] \arrow[r, "\pi_{p_i}"] & \Moduli_{g,P\backslash\{p_i\}} \arrow[d, "\pi"] \\
\Moduli_{g,\{p_i\}} \arrow[r, "\pi_{p_i}"] & \Moduli_{g}
\end{tikzcd}
\end{center}
commutes, so $\pi_{p_i*}\omega_{p_i} = \pi_{p_i*}\rho_{p_i}^*\psi_{p_i} = \pi^*\pi_{p_i*}\psi_{p_i} = (2g-2)\mathds{1}$.

If $i\neq j$, then $\pi_{p_i*}\omega_{p_j} = \pi_{p_i*}\pi_{p_i}^*\omega_{p_j} = 0$.
\end{proof}

\subsection*{Cones and properties of effective classes.} For a projective variety $X$, the sum of two effective codimension-$d$ classes is again effective, as is any $\mathbb{Q}_+$-multiple of the same. This gives a natural convex cone structure on the set of effective classes of codimension $d$ inside the $\mathbb{Q}$ vector space of all codimension-$d$ classes, called the \emph{effective cone of codimension-$d$ classes} and denoted $\Eff^d(X)$. Given an effective class $E$ in the Chow ring of $X$, an \emph{effective decomposition of} $E$ is an equality
\begin{align*}
E = \sum_{s=1}^m a_sE_s
\end{align*}
with $a_s > 0$ and $E_s$ irreducible effective cycles on $X$ for all $s$. The main properties we are interested in for classes in the pseudo-effective cone are rigidity and extremality.

\begin{definition}
\label{def-rigex}
Let $E\in\Eff^d(X)$.

$E$ is \emph{rigid} if any effective cycle with class $rE$ is supported on the support of $E$.

$E$ is \emph{extremal} if, for any effective decomposition of $E$, all $E_s$ are proportional to $E$.
\end{definition}

When $d=1$, elements of the cone correspond to divisor classes, and the study of $\Eff^1(\Moduli_{g,n})$ is fundamental in the theory of the birational geometry of these moduli spaces. For example, $\Moduli_{0,n}$ is known to fail to be a Mori dream space for $n\geq 10$ (first for $n\geq 134$ in \cite{castravettevelev}, then for $n\geq 13$ in \cite{gonzalezkaru}, and the most recent bound in \cite{hkl2016}). For $n\geq 3$ in genus one, \cite{chencoskun2014} show that $\Moduli_{1,n}$ is not a Mori dream space; the same statement is true for $\Moduli_{2,n}$ by \cite{mullane2017}. In these and select other cases, the pseudo-effective cone of divisors has been shown to have infinitely many extremal cycles and thus is not rational polyhedral (\cite{chencoskun2015}).

These results are possible due in large part to the following lemma, which plays an important role in Theorem \ref{thm-base}. Here a \emph{moving curve $\mathcal{C}$ in $D$} is an irreducible effective curve $\mathcal{C}$, the deformations of which cover a Zariski-dense subset of $D$.

\begin{lemma}[{{\cite[Lemma 4.1]{chencoskun2014}}}]
\label{lem-divisor}
Let $D$ be an irreducible effective divisor in a projective variety $X$, and suppose that $\mathcal{C}$ is a moving curve in $D$ satisfying $\DD \int_{X}[D]\cdot[\mathcal{C}] < 0$. Then $[D]$ is rigid and extremal. \hfill $\square$
\end{lemma}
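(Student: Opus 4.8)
The plan is to reduce both rigidity and extremality to a single positivity property of the moving curve: \emph{for every irreducible effective divisor $F$ with $F\neq D$, one has $\int_X [F]\cdot[\mathcal{C}]\geq 0$}. To establish this, I will use that the deformations of $\mathcal{C}$ sweep out a Zariski-dense subset of $D$; since $F$ and $D$ are distinct irreducible divisors, $F\cap D$ is a proper closed subset of $D$, so a general deformation $\mathcal{C}'$ of $\mathcal{C}$ is not contained in $F$. A curve not contained in an effective divisor meets it in a proper, hence nonnegative, intersection, and since $[\mathcal{C}']=[\mathcal{C}]$ we get $\int_X[F]\cdot[\mathcal{C}]\geq 0$. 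Together with the hypothesis $\int_X[D]\cdot[\mathcal{C}]<0$, this says $\mathcal{C}$ detects $D$ among all irreducible effective divisors. I will also invoke the standard fact that a nonzero effective divisor pairs strictly positively with $H^{\dim X-1}$ for an ample class $H$.

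For rigidity, let $E$ be an effective divisor with $[E]=r[D]$, $r>0$, and write $E=aD+E'$ where $a\geq 0$ records the multiplicity of $D$ in $E$ and every component of $E'$ is distinct from $D$. The positivity property gives $\int_X[E']\cdot[\mathcal{C}]\geq 0$, while the class identity $[E']=(r-a)[D]$ paired with $\mathcal{C}$, using $\int_X[D]\cdot[\mathcal{C}]<0$, forces $a\geq r$, so $r-a\leq 0$. If $E'\neq 0$, pairing $[E']=(r-a)[D]$ with $H^{\dim X-1}$ yields a strictly positive number on the left and a nonpositive number on the right, a contradiction. Hence $E'=0$ and $E$ is supported on $D$.

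For extremality, suppose $[D]=\sum_s b_s[E_s]$ is an effective decomposition with $b_s>0$ and each $E_s$ irreducible. Collecting the terms with $E_s=D$ into a coefficient $b\geq 0$ yields $(1-b)[D]=\sum_{E_s\neq D}b_s[E_s]$. Pairing with $\mathcal{C}$ and applying the positivity property together with $\int_X[D]\cdot[\mathcal{C}]<0$ gives $b\geq 1$, hence $1-b\leq 0$; since the right-hand side is effective, pairing with $H^{\dim X-1}$ forces every term with $E_s\neq D$ to vanish. Thus each $E_s$ equals $D$, and every class in the decomposition is proportional to $[D]$.

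The only genuinely geometric input—and the main obstacle—is the positivity property in the first paragraph; everything after is bookkeeping within the effective cone plus one ampleness inequality. The delicate point is verifying that a general deformation of $\mathcal{C}$ truly avoids each fixed component $F\neq D$, which is precisely where the moving hypothesis (that the deformations of $\mathcal{C}$ cover a dense subset of $D$) is indispensable: without it $\mathcal{C}$ could be trapped inside some other divisor, and the sign argument would collapse.
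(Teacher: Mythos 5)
Your proposal is correct. The paper itself offers no proof of this lemma---it is imported verbatim from Chen and Coskun (their Lemma 4.1)---and your argument, which establishes nonnegativity of $\int_X [F]\cdot[\mathcal{C}]$ for every irreducible effective divisor $F\neq D$ via the moving hypothesis and then concludes by sign bookkeeping against $\int_X [D]\cdot[\mathcal{C}]<0$ together with an ample-degree inequality, is essentially the same standard argument given in that reference.
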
 

\begin{remark}
Using Lemma \ref{lem-divisor} to show a divisor $D$ is rigid and extremal in fact shows more: if the lemma is satisfied, the boundary of the pseudo-effective cone is polyhedral at $D$. We do not rely on this fact, but see \cite[\textsection 6]{opie2016} for further discussion.
\end{remark}

Lemma \ref{lem-divisor} allows us to change a question about the pseudo-effective cone into one of intersection theory and provides a powerful tool in the study of divisor classes. Unfortunately, it fails to generalize to higher-codimension classes, where entirely different techniques are needed. Consequently, much less is known about $\Eff^d(\Moduli_{g,n})$ for $d\geq 2$. This paper is in part inspired by \cite{chentarasca}, where the authors show that certain hyperelliptic classes of higher codimension are rigid and extremal in genus two. In \cite{chencoskun2015}, the authors develop additional extremality criteria to show that in codimension-two there are infinitely many extremal cycles in $\Moduli_{1,n}$ for all $n\geq 5$ and in $\Moduli_{2,n}$ for all $n\geq 2$, as well as showing that two additional hyperelliptic classes of higher genus are extremal. These criteria cannot be used directly for the hyperelliptic classes we consider; this is illustrative of the difficulty of proving rigidity and extremality results for classes of codimension greater than one.

\section{Main theorem}
\label{sec-main}

In this section we prove our main result, which culminates in Theorem \ref{thm-main}. The proof proceeds via induction, with the base cases given in Theorem \ref{thm-base}. We begin by defining hyperelliptic classes on $\Moduli_{g,n}$.
\begin{definition}
\label{def-hyp}
Fix integers $\ell,m,n\geq 0$. Denote by $\Hyp_{g,\ell,2m,n}$ the closure of the locus of hyperelliptic curves in $\Moduli_{g,\ell+2m+n}$ with marked Weierstrass points $w_1,\dots,w_\ell$; pairs of marked points $+_1,-_1,\dots,+_m,-_m$ with $+_j$ and $-_j$ conjugate under the hyperelliptic map; and \emph{free} marked points $p_1,\dots,p_n$ with no additional constraints. By \emph{hyperelliptic class}, we mean a non-empty class equivalent to some $\left[\Hyp_{g,\ell,2m,n}\right]$ in the Chow ring of $\Moduli_{g,\ell+2m+n}$.
\end{definition}

\begin{figure}[ht]
\begin{tikzpicture}

\draw[very thick] (0,0) ellipse (4cm and 1.5cm);

\draw[very thick] (-2.3,-0.15) .. controls (-2.1,0.2) and (-0.9,0.2) .. (-0.7,-0.15); 
\draw[very thick] (-2.5,0) .. controls (-2.2,-0.4) and (-0.8,-0.4) .. (-0.5,0); 

\draw[very thick] (2.3,-0.15) .. controls (2.1,0.2) and (0.9,0.2) .. (0.7,-0.15); 
\draw[very thick] (2.5,0) .. controls (2.2,-0.4) and (0.8,-0.4) .. (0.5,0); 

\fill (-4,0) circle (0.10); \node at (-4.4,0.2) {$w_1$};
\fill (4,0) circle (0.10); \node at (4.4,0.2) {$w_2$};
\fill (-1.8,0.7) circle (0.10); \node at (-1.4,0.7) {$p_1$};
\fill (-1.4,-0.7) circle (0.10); \node at (-1,-0.7) {$p_2$};
\fill (1.1,-0.7) circle (0.10); \node at (1.5,-0.7) {$p_3$};

\end{tikzpicture}
\caption{The general element of $\Hyp_{2,2,0,3}$.}
\label{hfigure}
\end{figure}

Lemma \ref{lem-divisor} allows us to establish the rigidity and extremality of the two divisor hyperelliptic classes for genus two, which together provide the base case for Theorem \ref{thm-main}.

\begin{theorem}
\label{thm-base}
For $n \geq 0$, the class of $\Hyp_{2,0,2,n}$ is rigid and extremal in $\Eff^1(\Moduli_{2,2+n})$ and the class of $\Hyp_{2,1,0,n}$ is rigid and extremal in $
\Eff^1(\Moduli_{2,1+n})$.
\end{theorem}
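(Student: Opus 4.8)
The plan is to verify the hypotheses of Lemma \ref{lem-divisor} for each of the two divisors, $D = \Hyp_{2,0,2,n}\subseteq \Moduli_{2,2+n}$ and $D = \Hyp_{2,1,0,n}\subseteq \Moduli_{2,1+n}$: namely, that each is irreducible and carries a moving curve $\mathcal{C}$ with $\int_X [D]\cdot[\mathcal{C}] < 0$. Irreducibility is immediate in both cases, since each divisor is the image under the finite map $s$ of a space of admissible covers $\overline{Adm}_{2\xrightarrow{2}0,\dots}$, and such spaces of connected degree-two covers of $\PP^1$ with fixed branch behavior are irreducible. What remains is, for each divisor, to build a moving curve and to carry out one intersection computation.

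For the Weierstrass divisor $\Hyp_{2,1,0,n}$ I would generalize the construction of \cite{rulla01}. Writing $\overline{Adm} := \overline{Adm}_{2\xrightarrow{2}0,t_1,\dots,t_6,u_{1\pm},\dots,u_{n\pm}}$, the map $s$ is finite onto $D$ and $\dim\overline{Adm} = \dim\Moduli_{0,6+n} = 3+n = \dim D$. Fixing a general configuration of the base points $t_2,\dots,t_6,u_1,\dots,u_n$ on $\PP^1$ and letting the branch point $t_1$ (which carries the Weierstrass marking $w_1$) vary over $\PP^1$ produces a one-parameter family $\tilde{\mathcal{C}}\subseteq\overline{Adm}$; I set $\mathcal{C} := s_*\tilde{\mathcal{C}}$. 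As $t_1$ sweeps the base the double cover and its marked Weierstrass point vary, and allowing the fixed configuration itself to vary shows that the deformations of $\mathcal{C}$ cover a dense subset of $D$, so $\mathcal{C}$ is moving. For the conjugate-pair divisor $\Hyp_{2,0,2,n}$ I would run the analogous construction, instead letting the base point underlying the conjugate pair $+_1,-_1$ vary over $\PP^1$ while the branch points are held fixed, so that on a fixed genus-two cover the resulting family of conjugate pairs sweeps out $D$.

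The crux is the sign of the intersection number. Rather than writing $[D]$ explicitly in the tautological ring — which, as noted in the introduction, becomes unwieldy once free points are present — I would compute $\int_X[D]\cdot[\mathcal{C}]$ as $\deg_{\mathcal{C}}\mathcal{O}_X(D) = \deg N_{D}|_{\mathcal{C}}$ by working on $\overline{Adm}$. The normal direction to $D$ records the failure of $w_1$ to remain a branch point, and under the admissible-cover description this pulls back to a cotangent ($\psi$-type) line at the varying branch point $t_1$, whose degree along $\tilde{\mathcal{C}}$ is negative; the points at which $t_1$ collides with another $t_i$, where the cover degenerates, contribute boundary terms that I would track explicitly. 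The free points $u_{j}$ are held fixed in the family, so they enter only through controlled corrections — collisions of $t_1$ with a non-branch point $u_j$, which do not alter the genus of the cover — and these do not change the sign of the total.

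The main obstacle is precisely this last computation: correctly bookkeeping the boundary corrections where the varying branch point meets the other branch points, identifying the normal bundle degree through the double-cover geometry, and confirming that the free-point contributions leave the total negative uniformly in $n$. Once the intersection is shown to be negative in both cases, Lemma \ref{lem-divisor} yields rigidity and extremality, completing the base case for Theorem \ref{thm-main}.
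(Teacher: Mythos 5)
Your skeleton matches the paper's: both arguments run through Lemma \ref{lem-divisor}, and your moving curves are essentially the right ones (your conjugate-pair curve, obtained by fixing the branch points and the base points of the free markings and varying the base point of the pair, is exactly the paper's curve $\mathcal{C}$; your Weierstrass curve is a variant of the paper's admissible-covers curve $\mathcal{B}_n$). But there is a genuine gap: the entire content of Theorem \ref{thm-base} is the verification that the two intersection numbers are negative, and you leave both computations undone, explicitly labeling them ``the main obstacle.'' The paper carries them out concretely. For $\Hyp_{2,0,2,n}$ it avoids all normal-bundle bookkeeping by quoting Logan's expression $\left[\Hyp_{2,0,2,0}\right] = -\lambda + \psi_+ + \psi_- - 3\delta_{2,\varnothing} - \delta_{1,\varnothing}$ and applying the projection formula term by term, obtaining the value $-2$. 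For $\Hyp_{2,1,0,n}$ it uses the excess-intersection identity $\int \left[\Hyp_{2,1,0,n}\right]\cdot[\mathcal{B}_n] = \int -\pi_{p_n}^*\cdots\pi_{p_1}^*\psi_{w_1}\cdot[\mathcal{B}_n]$, and—crucially—chooses the fixed base configuration to be a maximally degenerate chain (and varies the \emph{unmarked} branch point $t_6$) precisely so that the integral collapses to $\int_{\Moduli_{1,2}} -\psi_{w_1}\cdot(3\psi_{w_1}) = -\tfrac{1}{8}$, using the known class $\left[\Hyp_{1,2,0,0}\right] = 3\psi_{w_1}$ from \cite{cavalierihurwitz}. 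Your choice (a general configuration, with the marked branch point $t_1$ varying) is not wrong in principle, but it forces exactly the collision-by-collision bookkeeping you defer, rather than engineering the curve so that the answer reduces to a known class.

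Moreover, the one concrete assertion you make about how negativity arises is backwards, and since the theorem is precisely a statement about a sign, this is not cosmetic. You claim the normal direction to the Weierstrass divisor ``pulls back to a cotangent ($\psi$-type) line at the varying branch point $t_1$, whose degree along $\tilde{\mathcal{C}}$ is negative.'' A $\psi$-class of negative degree on a complete curve is impossible here: $\psi$-classes on these spaces are nef (indeed $\psi_{p_n}$ is semi-ample on $\Moduli_{2,\{p_n\}}$ by \cite{rulla01}, as the paper itself uses, and nefness in general is \cite{keel99}). The correct geometric statement is dual to yours: away from the boundary the normal bundle of the Weierstrass divisor is the \emph{tangent} line at the marked Weierstrass point (deforming the marking off the Weierstrass point while the curve stays fixed), so $c_1(N) = -\psi_{w_1}$ restricted to the divisor, and the needed negativity is equivalent to showing $\deg \psi_{w_1} > 0$ on the moving curve. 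Until that positivity (with whatever boundary corrections your curve incurs) is actually established—and likewise the analogous computation for $\Hyp_{2,0,2,n}$—Lemma \ref{lem-divisor} cannot be invoked, so the proposal as written does not prove the theorem.
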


\begin{proof}
Define a moving curve $\mathcal{C}$ in $\Hyp_{2,0,2,n}$ by fixing a general genus-two curve $C$ with $n$ free marked points $p_1,\dots,p_n$ and varying the conjugate pair $(+,-)$. 

Since $\left[\Hyp_{2,0,2,n}\right] = \pi_{p_n}^*\cdots\pi_{p_1}^*\left[\Hyp_{2,0,2,0}\right]$, by the projection formula and the identity (see \cite{logan2003}) 
\begin{align*}
\left[\Hyp_{2,0,2,0}\right] = -\lambda + \psi_+ + \psi_- - 3\delta_{2,\varnothing} - \delta_{1,\varnothing},
\end{align*}
we compute
\begin{align*}
\int_{\Moduli_{2,2+n}} \left[\Hyp_{2,0,2,n}\right]\cdot [\mathcal{C}] &= \int_{\Moduli_{2,2}} \left[\Hyp_{2,0,2,0}\right] \cdot \pi_{p_1*}\cdots\pi_{p_n*}[\mathcal{C}] \\
&= 0 + (4-2+6) + (4-2+6) - 3(6) - 0 \\
&= -2.
\end{align*}
In particular, intersecting with $\lambda$ is $0$ by projection formula. Intersecting with either $\psi$-class can be seen as follows: pullback $\psi_i$ from $\Moduli_{2,1}$ to $\psi_i - \delta_{2,\varnothing}$, then use projection formula on $\psi_i$ back to $\Moduli_{2,1}$. This is just $2g-2$, since $\psi_i$ is the first Chern class of the cotangent bundle of $C$ over $i$. The intersection with $\delta_{2,\varnothing}$ corresponds to the $2g+2$ Weierstrass points. Finally, $\delta_{1,\varnothing}$ intersects trivially, since by fixing $C$ we have only allowed rational tail degenerations. As $\Hyp_{2,0,2,n}$ is irreducible, it is rigid and extremal by Lemma \ref{lem-divisor}.

We next apply Lemma \ref{lem-divisor} by constructing a moving curve $\mathcal{B}$ which intersects negatively with $\Hyp_{2,1,0,n}$ using the following diagram. Note that the image of $s$ is precisely $\Hyp_{2,1,0,n} \subset \Moduli_{2,1+n}$.

\begin{center}
\begin{tikzcd}[row sep=1cm, column sep=1cm] 
\overline{Adm}_{2\xrightarrow{2} 0,t_1,\dots,t_6,u_{1\pm},\dots,u_{n\pm}} \arrow[d, "c"] \arrow[r, "s"] & \Moduli_{2,1+n} \\
\Moduli_{0,\{t_1,\dots,t_6,u_{1},\dots,u_{n}\}} \arrow[d, "\pi_{t_6}"] \\
\Moduli_{0,\{t_1,\dots,t_5,u_{1},\dots,u_{n}\}} 
\end{tikzcd}
\end{center}

Fix a generic point $[x_n]$ in $\moduli_{0,\{t_1,\dots,t_5,u_{1},\dots,u_{n}\}}$ corresponding to a smooth marked curve and the point $[b_n]$ in $\Moduli_{0,\{t_1,\dots,t_5,u_{1},\dots,u_{n}\}}$ corresponding to a chain of $\mathbb{P}^1$s with $n+3$ components and marked points as shown in Figure \ref{pfigure} (if $n=0$, $t_4$ and $t_5$ are on the final component; if $n=1$, $t_5$ and $u_1$ are on the final component; etc.), and define $[\mathcal{X}_n] = s_*c^*\pi_{t_6}^*[x_n]$ and $[\mathcal{B}_n] = s_*c^*\pi_{t_6}^*[b_n]$ (with an additional relabeling of $t_1$ to $w_1$ and $u_{j_-}$ to $p_j$). Now $\mathcal{X}_n$ is a moving curve in $\Hyp_{2,1,0,n}$, and the deformations of $\mathcal{X}_n$ are parametrized by $\Moduli_{0,\{t_1,\dots,t_5,u_{1},\dots,u_{n}\}}$.


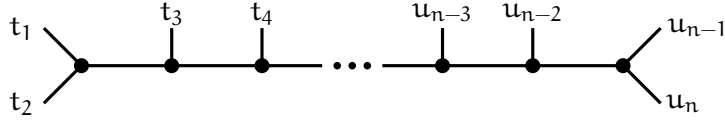
\begin{figure}[ht]
\begin{tikzpicture}

\fill (0,0) circle (0.10);
\draw[very thick] (0,0) -- (-0.5,0.5); \node at (-0.8,0.5) {$t_1$};
\draw[very thick] (0,0) -- (-0.5,-0.5); \node at (-0.8,-0.5) {$t_2$};
\draw[very thick] (0,0) -- (1.2,0);

\fill (1.2,0) circle (0.10);
\draw[very thick] (1.2,0) -- (1.2,0.5); \node at (1.2,0.7) {$t_3$};
\draw[very thick] (1.2,0) -- (2.4,0);

\fill (2.4,0) circle (0.10);
\draw[very thick] (2.4,0) -- (2.4,0.5); \node at (2.4,0.7) {$t_4$};
\draw[very thick] (2.4,0) -- (3.2,0);
\fill (3.4,0) circle (0.05); \fill (3.6,0) circle (0.05); \fill (3.8,0) circle (0.05);

\draw[very thick] (4,0) -- (4.8,0);
\fill (4.8,0) circle (0.10);
\draw[very thick] (4.8,0) -- (4.8,0.5); \node at (4.8,0.7) {$u_{n-3}$};
\draw[very thick] (4.8,0) -- (6,0);

\fill (6,0) circle (0.10);
\draw[very thick] (6,0) -- (6,0.5); \node at (6,0.7) {$u_{n-2}$};
\draw[very thick] (6,0) -- (7.2,0);

\fill (7.2,0) circle (0.10);
\draw[very thick] (7.2,0) -- (7.7,0.5); \node at (8.2,0.5) {$u_{n-1}$};
\draw[very thick] (7.2,0) -- (7.7,-0.5); \node at (8,-0.5) {$u_{n}$};

\end{tikzpicture}
\caption{The point $[b_n]$ in $\overline{\mathcal{M}}_{0,\{t_1,\dots,t_5,u_{1},\dots,u_{n}\}}$.}
\label{pfigure}
\end{figure}
Because the image of $s$ is $\Hyp_{2,1,0,n}$, the intersection $\left[\Hyp_{2,1,0,n}\right] \cdot [\mathcal{X}_n]$ is not transverse, so we correct with minus the Euler class of the normal bundle of $\Hyp_{2,1,0,n}$ in $\Moduli_{2,1+n}$ restricted to $\mathcal{X}_n$. Further, as all points in $\Moduli_{0,\{t_1,\dots,t_5,u_{1},\dots,u_{n}\}}$ are equivalent, we may replace $[\mathcal{X}_n]$ with $[\mathcal{B}_n]$ in the intersection. In other words,
\begin{align*}
\int_{\Moduli_{2,1+n}} \left[\Hyp_{2,1,0,n}\right] \cdot [\mathcal{X}_n] &= \int_{\Moduli_{2,1+n}} -\pi_{p_n}^*\cdots\pi_{p_1}^*\psi_{w_1}\cdot[\mathcal{X}_n] \\
&= \int_{\Moduli_{2,1+n}} -\pi_{p_n}^*\cdots\pi_{p_1}^*\psi_{w_1}\cdot[\mathcal{B}_n] \\
&= \int_{\Moduli_{2,1}} -\psi_{w_1}\cdot[\mathcal{B}_{0}].
\end{align*}
By passing to the space of admissible covers (see, for example, \cite{rulla01}), this integral is seen to be a positive multiple (a power of two) of
\begin{align*}
\int_{\Moduli_{1,2}} -\psi_{w_1}\cdot \left[\Hyp_{1,2,0,0}\right] &= \int_{\Moduli_{1,2}} -\psi_{w_1}\cdot (3\psi_{w_1}) \\
&= -\frac{1}{8},
\end{align*}
where we have used the fact that $\left[\Hyp_{1,2,0,0}\right] = 3\psi_{w_1}$ \cite{cavalierihurwitz}. Therefore, by Lemma \ref{lem-divisor}, $\Hyp_{2,1,0,n}$ is rigid and extremal.
\end{proof}

This establishes the base case for the inductive hypothesis in Theorem \ref{thm-main}. The induction procedure differs fundamentally for the codimension-two classes, so we first prove the following short lemma to simplify the most complicated of those.

\begin{lemma}
\label{lem-notprop}
The class $W_{2,\{p_1,\dots,p_n\}}$ is not proportional to $\left[\Hyp_{2,2,0,n}\right]$ on $\Moduli_{2,2+n}$.
\end{lemma}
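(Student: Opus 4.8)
The plan is to separate the two classes by pairing with a tautological class that restricts to zero on one but not the other. The key observation is that throughout the family $W_{2,\{p_1,\dots,p_n\}}$ the marked points $w_1$ and $w_2$ lie on a rational tail carrying exactly three special points: the node together with $w_1$ and $w_2$. Realizing $W_{2,\{p_1,\dots,p_n\}}$ as the image of the gluing map
$$\xi\colon \Moduli_{2,\{p_1,\dots,p_n,\star\}}\times\Moduli_{0,\{w_1,w_2,\star\}}\to\Moduli_{2,2+n}$$
restricted to the locus where the node $\star$ is a Weierstrass point, the genus-zero factor $\Moduli_{0,\{w_1,w_2,\star\}}$ is a single point, so $\xi^*\psi_{w_1}=0$. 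By the projection formula this gives the clean structural vanishing
$$\psi_{w_1}\cdot\left[W_{2,\{p_1,\dots,p_n\}}\right]=0$$
in the Chow ring of $\Moduli_{2,2+n}$.

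Next I would show that the corresponding product does \emph{not} vanish for the hyperelliptic class, i.e.\ that $\psi_{w_1}\cdot\left[\Hyp_{2,2,0,n}\right]\neq 0$. On the dense locus of smooth curves $w_1$ is a genuine Weierstrass point whose cotangent line varies nontrivially in families, so $\psi_{w_1}$ is a nonzero class along $\Hyp_{2,2,0,n}$; the real task is to certify this with an actual nonzero intersection number. Since free points are added by pullback, $\left[\Hyp_{2,2,0,n}\right]=\pi_{p_1}^*\cdots\pi_{p_n}^*\left[\Hyp_{2,2,0,0}\right]$, so I would reduce to $n=0$ by capping with the free-point classes $\psi_{p_1}\cdots\psi_{p_n}$ and applying the dilaton/string pushforwards to land on $\Moduli_{2,2}$. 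There it suffices to produce a single nonzero number such as $\int_{\Moduli_{2,2}}\psi_{w_1}^{a}\psi_{w_2}^{b}\lambda^{c}\cdot\left[\Hyp_{2,2,0,0}\right]$ with $a+b+c=3$, computable from the known expression for $\left[\Hyp_{2,2,0,0}\right]$ together with standard $\psi$--$\lambda$ intersection numbers on $\Moduli_{2,2}$ (a computation of the same flavor as those in Theorem \ref{thm-base}).

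With these two facts the lemma follows at once. If $\left[W_{2,\{p_1,\dots,p_n\}}\right]=c\left[\Hyp_{2,2,0,n}\right]$ for some constant $c$, then capping with $\psi_{w_1}$ gives $0=c\,\psi_{w_1}\cdot\left[\Hyp_{2,2,0,n}\right]$, forcing $c=0$ and hence $\left[W_{2,\{p_1,\dots,p_n\}}\right]=0$. This contradicts the fact that $W_{2,\{p_1,\dots,p_n\}}$ is a nonempty effective stratum, so the two classes cannot be proportional.

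The hard part will be the nonvanishing in the second step. The vanishing on $W_{2,\{p_1,\dots,p_n\}}$ is immediate, but certifying $\psi_{w_1}\cdot\left[\Hyp_{2,2,0,n}\right]\neq 0$ requires care: one must either control the boundary comparison terms $\psi_{w_1}-\pi_{p_i}^*\psi_{w_1}$ that appear in the reduction from $n$ free points to none — choosing the auxiliary test class transverse to the relevant boundary divisors where $w_1$ collides with a free point — or bypass the reduction altogether by exhibiting an explicit complete test family inside the smooth hyperelliptic locus along which $\psi_{w_1}$ has positive degree. I expect the explicit computation on $\Moduli_{2,2}$ to be the most reliable route.
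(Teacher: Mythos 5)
Your strategy coincides with the paper's: both proofs separate the two classes by pairing them against $\psi_{w_1}$-classes, exploiting the fact that in $W_{2,P}$ (where $P=\{p_1,\dots,p_n\}$) the point $w_1$ sits on a three-pointed rational tail. Your first half is complete and in fact slightly stronger than what the paper records: the gluing-map/projection-formula argument gives the class-level vanishing $\psi_{w_1}\cdot W_{2,P}=0$, whereas the paper only uses the numerical vanishing $W_{2,P}\cdot\psi_{w_1}^{n+3}=0$, justified by the same dimensionality observation on the rational component. Your closing deduction (proportionality would force $W_{2,P}=0$, contradicting nonemptiness of an effective stratum) is also fine.

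The gap is that the second half, the nonvanishing, is never actually established, and that is where the entire content of the lemma sits; you yourself call it ``the hard part'' and leave both proposed routes unexecuted. Nonvanishing of $\psi_{w_1}\cdot\left[\Hyp_{2,2,0,n}\right]$ does not follow from the soft observation that the cotangent line at a Weierstrass point ``varies nontrivially'': it requires producing a nonzero number, and a priori individual candidates $\int\psi_{w_1}^{a}\psi_{w_2}^{b}\lambda^{c}\cdot\left[\Hyp_{2,2,0,0}\right]$ could vanish. The paper closes this by choosing the test class to be the pure power $\psi_{w_1}^{n+3}$. This choice also dissolves the comparison-term difficulty you raise: since $\psi_{w_1}\cdot\delta_{0,\{w_1,p_i\}}=0$, one has $\pi_{p_i*}\psi_{w_1}^{a}=\psi_{w_1}^{a-1}$, so forgetting the free points introduces no correction terms and the integral reduces cleanly to
\begin{align*}
\int_{\Moduli_{2,2}}\left[\Hyp_{2,2,0,0}\right]\cdot\psi_{w_1}^{3} = \frac{1}{384} \neq 0,
\end{align*}
computed from the Chen--Tarasca expression for $\left[\Hyp_{2,2,0,0}\right]$ together with Faber's program. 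With that number in hand (or any completed version of your step two), your argument closes; without it, what you have is a correct plan rather than a proof.
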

\begin{proof}
Let $P = \{p_1,\dots,p_n\}$. Note that in $W_{2,P}$ the marked points $w_1$ and $w_2$ carry no special restrictions, and the class is of codimension two. Because the point $w_1$ is on a three-pointed rational component of the general element of $W_{2,P}$,
\begin{align*}
\int_{\Moduli_{2,2+n}} W_{2,P} \cdot \psi_{w_1}^{n+3} &= \int_{\Moduli_{0,3}} \psi_{w_1}^{n+3} = 0.
\end{align*}
However, using the equality
\begin{align*}
\left[\Hyp_{2,2,0,0}\right] = 6\psi_{w_1}\psi_{w_2} - \frac{3}{2}(\psi_{w_1}^2+\psi_{w_2}^2) - (\psi_{w_1} + \psi_{w_2})\left(\frac{21}{10}\delta_{1,\{w_1\}} + \frac{3}{5}\delta_{1,\varnothing} + \frac{1}{20}\delta_{irr}\right)
\end{align*}
established in \cite[Equation 4]{chentarasca} and Faber's Maple program \cite{faberprogram}, we compute
\begin{align*}
\int_{\Moduli_{2,2+n}} \left[\Hyp_{2,2,0,n}\right] \cdot \psi_{w_1}^{n+3} &= \int_{\Moduli_{2,2+n}} \pi_{p_1}^*\cdots\pi_{p_n}^*\left[\Hyp_{2,2,0,0}\right] \cdot \psi_{w_1}^{n+3} \\
&= \int_{\Moduli_{2,2}} \left[\Hyp_{2,2,0,0}\right] \cdot \pi_{p_1*}\cdots\pi_{p_n*}\psi_{w_1}^{n+3} \\
&= \int_{\Moduli_{2,2}} \Bigg(6\psi_{w_1}\psi_{w_2} - \frac{3}{2}(\psi_{w_1}^2+\psi_{w_2}^2) \\
& \hspace{1.5cm} - (\psi_{w_1} + \psi_{w_2})\left(\frac{21}{10}\delta_{1,\{w_1\}} + \frac{3}{5}\delta_{1,\varnothing} + \frac{1}{20}\delta_{irr}\right)\Bigg) \cdot \psi_{w_1}^3 \\
&= \frac{1}{384},
\end{align*}
so $W_{2,P}$ is not a non-zero multiple of $\left[\Hyp_{2,2,0,n}\right]$.
\end{proof}

We are now ready to prove our main result. The bulk of the effort is in establishing extremality, though the induction process does require rigidity at every step as well. Although we do not include it until the end, the reader is free to interpret the rigidity argument as being applied at each step of the induction.

The overall strategy of the extremality portion of the proof is as follows. Suppose $\left[\Hyp_{2,\ell,2m,n}\right]$ is given an effective decomposition. We show (first for the classes of codimension at least three, then for those of codimension two) that any terms of this decomposition which survive pushforward by $\pi_{w_i}$ or $\pi_{+_j}$ must be proportional to the hyperelliptic class itself. Therefore we may write $\left[\Hyp_{2,\ell,2m,n}\right]$ as an effective decomposition using only classes which vanish under pushforward by the forgetful morphisms; this is a contradiction, since the hyperelliptic class itself survives pushforward.

\begin{theorem}
\label{thm-main}
For $\ell,m,n\geq 0$, the class $\Hyp_{2,\ell,2m,n}$, if non-empty, is rigid and extremal in $\Eff^{\ell+m}(\Moduli_{2,\ell+2m+n})$.
\end{theorem}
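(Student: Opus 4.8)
The plan is to prove both assertions by induction on the codimension $\ell+m$, taking Theorem \ref{thm-base} as the base case $\ell+m=1$. Each of the $\ell+m$ codimension-one conditions defining $\Hyp_{2,\ell,2m,n}$ is forgotten by a morphism: $\pi_{w_i}$ drops the Weierstrass condition on $w_i$, while $\pi_{+_j}$ drops a conjugacy condition by forgetting $+_j$ and leaving $-_j$ as a free point. The first thing I would record are the pushforward identities
\[
\pi_{w_i*}[\Hyp_{2,\ell,2m,n}] = (7-\ell)\,[\Hyp_{2,\ell-1,2m,n}], \qquad
\pi_{+_j*}[\Hyp_{2,\ell,2m,n}] = [\Hyp_{2,\ell,2(m-1),n+1}],
\]
where $7-\ell$ counts the Weierstrass points still available to the reinstated marking and the second coefficient is $1$ because the conjugate of a marked point is uniquely determined. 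Both targets have codimension $\ell+m-1$ and so are rigid and extremal by the inductive hypothesis; the entire strategy is to transfer these properties upward along these generically finite maps.

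For extremality when $\ell+m\geq 3$ I would generalize the argument of \cite[Theorem 4]{chentarasca}. Given an effective decomposition $[\Hyp_{2,\ell,2m,n}]=\sum_s a_s E_s$, applying any forgetful morphism $\pi$ produces an effective decomposition of a rigid and extremal hyperelliptic class of codimension $\ell+m-1$; hence each $\pi_* E_s$ is either zero (when $\pi$ contracts $E_s$) or proportional to, and by rigidity supported on, the corresponding lower hyperelliptic locus. The key point is that a term surviving enough of these morphisms must itself lie in $\Hyp_{2,\ell,2m,n}$: forgetting the point attached to a condition $c$ by a morphism $\pi_{c'}$ with $c'\neq c$ exhibits $c$ in the (hyperelliptic) image, and with at least three morphisms available every condition can be certified, forcing $E_s$ proportional to $[\Hyp_{2,\ell,2m,n}]$. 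One then argues that the only terms surviving all the forgetful pushforwards are the proportional ones, so that removing them would express a positive multiple of $[\Hyp_{2,\ell,2m,n}]$ using only cycles killed by the forgetful morphisms; since $[\Hyp_{2,\ell,2m,n}]$ has nonzero image under every such morphism, this is impossible and extremality follows. Rigidity runs in parallel: an effective cycle of class $r[\Hyp_{2,\ell,2m,n}]$ has each pushforward supported on a lower hyperelliptic locus, and combining across the morphisms confines its support to $\Hyp_{2,\ell,2m,n}$.

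The main obstacle is the codimension-two case $\ell+m=2$, and especially $\ell=2,\ m=0$. Here only two conditions are available to forget, so the certification above cannot pin down both at once; worse, because the forgetful morphisms stabilize, a boundary stratum can imitate the hyperelliptic class after pushforward. Concretely, the codimension-two stratum $W_{2,P}$, whose general member carries $w_1$ and $w_2$ on a rational tail meeting a genus-two component at a Weierstrass node, pushes forward under both $\pi_{w_1}$ and $\pi_{w_2}$ to a positive multiple of $[\Hyp_{2,1,0,n}]$ --- exactly as $[\Hyp_{2,2,0,n}]$ does --- so the pushforward machinery alone cannot exclude it from an effective decomposition. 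To separate the two classes I would leave the pushforward method and compute an intersection number: Lemma \ref{lem-notprop} records that $W_{2,P}$ is not proportional to $[\Hyp_{2,2,0,n}]$, and the $\omega$-class identities of Lemmas \ref{lem-fallomega} and \ref{lem-omegadil} let this computation be carried out uniformly in the number $n$ of free marked points. This is precisely the step where the explicit formula for $[\Hyp_{2,2,0,0}]$ used in \cite{chentarasca} becomes unwieldy once $n>0$, and replacing it with the $\omega$-class calculation is what makes the induction go through. The remaining codimension-two subcases, $\ell=1,\ m=1$ and $\ell=0,\ m=2$, admit fewer competing boundary strata and are dispatched by the same pushforward reduction once this calculation is in hand.
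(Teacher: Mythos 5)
Your overall architecture coincides with the paper's: induction on the codimension $\ell+m$ with Theorem \ref{thm-base} as base case, the pushforward identities with coefficient $7-\ell = 6-(\ell-1)$, the survival argument pinning down terms when $\ell+m\geq 3$, the identification of $W_{2,P}$ as the one competitor that survives pushforward when $\ell=2$, $m=0$ (and your explanation of \emph{why} it survives --- equal dimension of image --- is correct and matches the paper), and the final step that terms killed by all forgetful morphisms cannot account for a class that survives them. The rigidity argument is also the paper's.

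However, there is a genuine gap at exactly the subcase you correctly flag as the crux, $\ell=2$, $m=0$. You write that Lemma \ref{lem-notprop} ``records that $W_{2,P}$ is not proportional to $[\Hyp_{2,2,0,n}]$'' and treat this, together with the $\omega$-class lemmas, as sufficient to exclude $W_{2,P}$ from an effective decomposition. But non-proportionality alone excludes nothing: the assertion that every term of an effective decomposition is proportional to the class is precisely the extremality being proved, so using it at level $n$ is circular. What is needed is a mechanism which, assuming $W_{2,P}$ appears with positive coefficient $a_0$, derives a contradiction. The paper's mechanism is a nested induction on $n$ (base case $n=0$ from \cite[Theorem 5]{chentarasca}): given $[\Hyp_{2,2,0,n}] = a_0 W_{2,P} + \sum_s a_s [Z_s]$, multiply by $\omega_{p_n}$ and push forward under $\pi_{p_n}$. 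Using the identity $W_{2,P} = \pi_{p_n}^* W_{2,P\backslash\{p_n\}} - W_{2,P\backslash\{p_n\}}$, Lemma \ref{lem-omegadil} on the left-hand side, and Lemma \ref{lem-fallomega} to kill the correction term, one obtains
\begin{align*}
2\left[\Hyp_{2,2,0,n-1}\right] = 2a_0 W_{2,P\backslash\{p_n\}} + \sum_s a_s \pi_{p_n*}\left(\omega_{p_n}\cdot[Z_s]\right).
\end{align*}
The crucial point your proposal omits is that $\omega_{p_n}$ is semi-ample (it is pulled back from $\psi_{p_n}$ on $\Moduli_{2,\{p_n\}}$, which is semi-ample by \cite{rulla01}); this is what guarantees the displayed right-hand side is again an \emph{effective} decomposition --- multiplying by an arbitrary class and pushing forward would not preserve effectivity, and the argument collapses without this. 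Granting it, extremality of $[\Hyp_{2,2,0,n-1}]$ (the inductive hypothesis on $n$) forces $W_{2,P\backslash\{p_n\}}$ to be proportional to $[\Hyp_{2,2,0,n-1}]$, and only at this point does Lemma \ref{lem-notprop} enter, supplying the contradiction one level down. So the right ingredients all appear in your write-up, but the logical chain connecting them --- nested induction on $n$, multiplication by a semi-ample $\omega$-class to produce a new effective decomposition, then non-proportionality as the terminal contradiction --- is missing, and as stated the exclusion of $W_{2,P}$ is not proved.
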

\allowdisplaybreaks

\begin{proof}
We induct on codimension; assume the claim holds when the class is codimension $\ell+m-1$. Theorem \ref{thm-base} is the base case, so we may further assume $\ell+m \geq 2$. Now, suppose that
\begin{align}
\label{eq-decomp}
\left[\Hyp_{2,\ell,2m,n}\right] = \sum_{s} a_s\left[X_s\right] + \sum_{t} b_t\left[Y_t\right]
\end{align}
is an effective decomposition with $\left[X_s\right]$ and $\left[Y_t\right]$ irreducible codimension-$(\ell+m)$ effective cycles on $\Moduli_{2,\ell+2m+n}$, with $[X_s]$ surviving pushforward by some $\pi_{w_i}$ or $\pi_{+_j}$ and $[Y_t]$ vanishing under all such pushforwards, for each $s$ and $t$.

Fix an $\left[X_s\right]$ appearing in the right-hand side of $\eqref{eq-decomp}$. If $\ell\neq 0$, suppose without loss of generality (on the $w_i$) that $\pi_{w_1*}\left[X_s\right] \neq 0$. Since 
\begin{align*}
\pi_{w_1*}\left[\Hyp_{2,\ell,2m,n}\right] = (6-(\ell-1))\left[\Hyp_{2,\ell-1,2m,n}\right]
\end{align*}
is rigid and extremal by hypothesis, $\pi_{w_1*}\left[X_s\right]$ is a positive multiple of the class of $\Hyp_{2,\ell-1,2m,n}$ and $X_s\subseteq (\pi_{w_1})^{-1}\Hyp_{2,\ell-1,2m,n}$. By the commutativity of the following diagrams and the observation that hyperelliptic classes survive pushforward by all $\pi_{w_i}$ and $\pi_{+_j}$, we have that $\pi_{w_i*}\left[X_s\right] \neq 0$ and $\pi_{+_j*}\left[X_s\right] \neq 0$ for all $i$ and $j$.


\begin{center}
\begin{tikzcd}[row sep=1cm, column sep=1cm] 
\Hyp_{2,\ell,2m,n} \arrow[d, "\pi_{w_1}"] \arrow[r, "\pi_{+_j}"] & \Hyp_{2,\ell,2(m-1),n+1} \arrow[d, "\pi_{w_1}"] & & \Hyp_{2,\ell,2m,n} \arrow[d, "\pi_{w_1}"] \arrow[r, "\pi_{w_i}"] & \Hyp_{2,\ell-1,2m,n} \arrow[d, "\pi_{w_1}"] \\
\Hyp_{2,\ell-1,2m,n} \arrow[r, "\pi_{+_j}"] & \Hyp_{2,\ell-1,2(m-1),n+1} & & \Hyp_{2,\ell-1,2m,n} \arrow[r, "\pi_{w_i}"] & \Hyp_{2,\ell-2,2m,n}
\end{tikzcd}
\end{center}

If $\ell = 0$, suppose without loss of generality (on the $+_j$) that $\pi_{+_1*}\left[X_s\right] \neq 0$. Then the same conclusion holds that $\left[X_s\right]$ survives all pushforwards by $\pi_{+_j}$, since
\begin{align*}
\pi_{+_1*}\left[\Hyp_{2,\ell,2m,n}\right] = \left[\Hyp_{2,\ell,2(m-1),n+1}\right]
\end{align*}
is rigid and extremal by hypothesis, and $\pi_{+_1}$ commutes with $\pi_{+_j}$.

It follows that for any $\ell+m\geq 2$
\begin{align*}
\DD X_s \subseteq \bigcap_{i,j}\left((\pi_{w_i})^{-1}\Hyp_{2,\ell-1,2m,n}\cap (\pi_{+_j})^{-1}\Hyp_{2,\ell,2(m-1),n+1}\right).
\end{align*}
We now have two cases. If $\ell+m \geq 3$, any $\ell + 2m - 1$ non-free marked points in a general element of $X_s$ are distinct Weierstrass or conjugate pair marked points, and hence all $\ell + 2m$ such non-free marked points in a general element of $X_s$ are distinct Weierstrass or conjugate pair marked points. In this case we conclude that $\left[X_s\right]$ is a positive multiple of $\left[\Hyp_{2,\ell,2m,n}\right]$. If $\ell+m = 2$, we must analyze three subcases.


If $\ell = 0$ and $m = 2$, then
\begin{align*}
\DD X_s\subseteq (\pi_{+_1})^{-1}\Hyp_{2,0,2,n+1} \cap (\pi_{+_2})^{-1}\Hyp_{2,0,2,n+1}.
\end{align*}
The modular interpretation of the intersection leaves three candidates for $\left[X_s\right]$: $W_{2,P}$ or $\gamma_{1,P}$ for some $P$ containing neither conjugate pair, or $\left[\Hyp_{2,0,4,n}\right]$ itself. However, for the former two, we have $\dim W_{2,P} \neq \dim \pi_{+_1}(W_{2,P})$ and $\dim \gamma_{1,P} \neq \dim \pi_{+_1}(\gamma_{1,P})$ for all such $P$, contradicting our assumption that the class survived pushforward. Thus $\left[X_s\right]$ is proportional to $\left[\Hyp_{2,0,4,n}\right]$.

If $\ell = 1$ and $m = 1$, similar to the previous case, $\left[X_s\right]$ could be $\left[\Hyp_{2,1,2,n}\right]$ or $W_{2,P}$ or $\gamma_{1,P}$ for some $P$ containing neither the conjugate pair nor the Weierstrass point. However, if $X_s$ is either of the latter cases, we have $\dim X_s \neq \dim \pi_{+_1}(X_s)$, again contradicting our assumption about the non-vanishing of the pushforward, and so again $[X_s]$ must be proportional to $\left[\Hyp_{2,1,2,n}\right]$.

If $\ell = 2$ and $m = 0$, as before, $\left[X_s\right]$ is either $\left[\Hyp_{2,2,0,n}\right]$ itself or $W_{2,P}$ or $\gamma_{1,P}$ for $P = \{p_1,\dots,p_n\}$. Now $\dim W_{2,P} = \dim \pi_{w_i}W_{2,P},$ so the argument given in the other subcases fails (though $\gamma_{1,P}$ is still ruled out as before). Nevertheless, we claim that $W_{2,P}$ cannot appear on the right-hand side of (\ref{eq-decomp}) for $\Hyp_{2,2,0,n}$; to show this we induct on the number of free marked points $n$. The base case of $n=0$ is established in \cite[Theorem 5]{chentarasca}, so assume that $\Hyp_{2,2,0,n-1}$ is rigid and extremal for some $n\geq 1$. Suppose for the sake of contradiction that
\begin{align}
\label{eq-decomp2}
\left[\Hyp_{2,2,0,n}\right] = a_0W_{2,P} + \sum_{s} a_s\left[Z_s\right]
\end{align}
is an effective decomposition with each $\left[Z_s\right]$ an irreducible codimension-two effective cycle on $\Moduli_{2,2+n}$. Note that
\begin{align*}
W_{2,P} = \pi_{p_n}^*W_{2,P\backslash\{p_n\}} - W_{2,P\backslash\{p_n\}}.
\end{align*}
Multiply (\ref{eq-decomp2}) by  $\omega_{p_n}$ and push forward by $\pi_{p_n}$. On the left-hand side,
\begin{align*}
\pi_{p_n*}\left(\omega_{p_n}\cdot\left[\Hyp_{2,2,0,n}\right]\right) &= \pi_{p_n*}\left(\omega_{p_n}\cdot \pi_{p_n}^*\left[\Hyp_{2,2,0,n-1}\right]\right) \\
&= \pi_{p_n*} \left(\omega_{p_n}\right) \cdot \left[\Hyp_{2,2,0,n-1}\right] \\
&= 2 \left[\Hyp_{2,2,0,n-1}\right],
\end{align*}
having applied Lemma \ref{lem-omegadil}. Combining this with the right-hand side,
\begin{align*}
2\left[\Hyp_{2,2,0,n-1}\right] &= a_0\pi_{p_n*}\left(\omega_{p_n}\cdot \pi_{p_n}^*W_{2,P\backslash\{p_n\}} - \omega_{p_n}\cdot W_{2,P\backslash\{p_n\}}\right) + \sum_{s} a_s\pi_{p_n*}\left(\omega_{p_n}\cdot \left[Z_s\right]\right) \\
&= 2a_0W_{2,P\backslash\{p_n\}} + \pi_{p_n*}\left(\omega_{p_n}\cdot W_{2,P\backslash\{p_n\}}\right) + \sum_{s} a_s\pi_{p_n*}\left(\omega_{p_n}\cdot \left[Z_s\right]\right).
\end{align*}
The term $\pi_{p_n*}\left(\omega_{p_n}\cdot W_{2,P\backslash\{p_n\}}\right)$ vanishes by Lemma \ref{lem-fallomega}:
\begin{align*}
\pi_{p_n*}\left(\omega_{p_n}\cdot W_{2,P\backslash\{p_{n-1}\}}\right) &= \pi_{p_n*} \left(\omega_{w_1}\cdot W_{2,P\backslash\{p_n\}}\right) \\
&= \pi_{p_n*} \left(\pi_{p_n}^*\omega_{w_1}\cdot W_{2,P\backslash\{p_n\}}\right) \\
&= \omega_{w_1}\cdot \pi_{p_n*} W_{2,P\backslash\{p_n\}} \\
&= 0,
\end{align*}
where $w_1$ is the Weierstrass singular point on the genus-two component of $W_{2,P\backslash\{p_n\}}$. Altogether, we have
\begin{align*}
2\left[\Hyp_{2,2,0,n-1}\right] &= 2a_0W_{2,P\backslash\{p_n\}} + \sum_{s} a_s\pi_{p_n*}\left(\omega_{p_n}\cdot \left[Z_s\right]\right).
\end{align*}
\cite{rulla01} establishes that $\psi_{p_n}$ is semi-ample on $\Moduli_{2,\{p_n\}}$, so $\omega_{p_n}$ is semi-ample, and hence this is an effective decomposition. By hypothesis, $\Hyp_{2,2,0,n-1}$ is rigid and extremal, so $W_{2,P\backslash\{p_n\}}$ must be a non-zero multiple of $\left[\Hyp_{2,2,0,n-1}\right]$, which contradicts Lemma \ref{lem-notprop}. Therefore $W_{2,P}$ cannot appear as an $\left[X_s\right]$ in (\ref{eq-decomp}).

Thus for all cases of $\ell+m = 2$ (and hence for all $\ell+m\geq 2$), we conclude that each $\left[X_s\right]$ in (\ref{eq-decomp}) is a positive multiple of $\left[\Hyp_{2,\ell,2m,n}\right]$. Now subtract these $\left[X_s\right]$ from \eqref{eq-decomp} and rescale, so that
\begin{align*}
\left[\Hyp_{2,\ell,2m,n}\right] = \sum_{t} b_t\left[Y_t\right].
\end{align*}
Recall that each $\left[Y_t\right]$ is required to vanish under all $\pi_{w_i*}$ and $\pi_{+_j*}$. But the pushforward of $\left[\Hyp_{2,\ell,2m,n}\right]$ by any of these morphisms is non-zero, so there are no $[Y_t]$ in \eqref{eq-decomp}. Hence $\left[\Hyp_{2,\ell,2m,n}\right]$ is extremal in $\Eff^{\ell+m}(\Moduli_{2,\ell+2m+n})$.

For rigidity, suppose that $E:= r\left[\Hyp_{2,\ell,2m,n}\right]$ is effective. Since $\pi_{w_i*}E = (6-(\ell-1))r\left[\Hyp_{2,\ell-1,2m,n}\right]$ and $\pi_{+_j*}E = r\left[\Hyp_{2,\ell,2(m-1),n+1}\right]$ are rigid and extremal for all $i$ and $j$, we have that $\pi_{w_i*}E$ is supported on $\Hyp_{2,\ell-1,2m,n}$ and $\pi_{+_j*}E$ is supported on $\Hyp_{2,\ell,2(m-1),n+1}$. This implies that $E$ is supported on the intersection of $(\pi_{w_i})^{-1}\left[\Hyp_{2,\ell-1,2m,n}\right]$ and $(\pi_{+_j})^{-1}\left[\Hyp_{2,\ell,2(m-1),n+1}\right]$ for all $i$ and $j$. Thus $E$ is supported on $\Hyp_{2,\ell,2m,n}$, so $\left[\Hyp_{2,\ell,2m,n}\right]$ is rigid.
\end{proof}

\section{Higher genus}
\label{sec-highergenus}

The general form of the inductive argument in Theorem \ref{thm-main} holds independent of genus for $g\geq 2$. However, for genus greater than one, the locus of hyperelliptic curves in $\moduli_{g}$ is of codimension $g-2$, so that the base cases increase in codimension as $g$ increases.  The challenge in showing the veracity of the claim for hyperelliptic classes in arbitrary genus is therefore wrapped up in establishing the base cases of codimension $g-1$ (corresponding to Theorem \ref{thm-base}) and codimension $g$ (corresponding to the three $\ell+m = 2$ subcases in Theorem \ref{thm-main}).

In particular, our proof of Theorem \ref{thm-base} relies on the fact that $\Hyp_{2,0,2,n}$ and $\Hyp_{2,1,0,n}$ are divisors, and the subcase $\ell = 2$ in Theorem \ref{thm-main} depends on our ability to prove Lemma \ref{lem-notprop}. This in turn requires the description of $\Hyp_{2,2,0,0}$ given by \cite{chentarasca}. More subtly, we require that $\psi_{p_n}$ be semi-ample in $\Moduli_{2,\{p_n\}}$, which is known to be false in genus greater than two in characteristic 0 \cite{keel99}. In genus three, \cite{chencoskun2015} show that the base case $\Hyp_{3,1,0,0}$ is rigid and extremal, though it is unclear if their method will extend to $\Hyp_{3,1,0,n}$. Moreover, little work has been done to establish the case of a single conjugate pair in genus three, and as the cycles move farther from divisorial classes, such analysis becomes increasingly more difficult.

One potential avenue to overcome these difficulties is suggested by work of Renzo Cavalieri and Nicola Tarasca \cite{cavalieritarasca2017}. They use an inductive process to describe hyperelliptic classes in terms of decorated graphs using the usual dual graph description of the tautological ring of $\Moduli_{g,n}$. Such a formula for the three necessary base cases would allow for greatly simplified intersection-theoretic calculations, similar to those used in Theorem \ref{thm-base} and Lemma \ref{lem-notprop}. Though such a result would be insufficient to completely generalize our main theorem, it would be a promising start.

We also believe the observation that pushing forward and pulling back by forgetful morphisms moves hyperelliptic classes to (multiples of) hyperelliptic classes is a useful one. There is evidence that a more explicit connection between marked Weierstrass points, marked conjugate pairs, and the usual gluing morphisms between moduli spaces of marked curves exists as well, though concrete statements require a better understanding of higher genus hyperelliptic loci. Although it is known that hyperelliptic classes do not form a cohomological field theory over the full $\Moduli_{g,n}$, a deeper study of the relationship between these classes and the natural morphisms among the moduli spaces may indicate a CohFT-like structure, which in turn would shed light on graph formulas or other additional properties.


\providecommand{\bysame}{\leavevmode\hbox to3em{\hrulefill}\thinspace}


\begin{thebibliography}{X}

\bibitem[ACV01]{acv}
Dan Abramovich, Alessio Corti, and Angelo Vistoli, \emph{Twisted bundles and
  admissible covers}, Comm. in Algebra \textbf{31} (2001), no.~8, 3547--3618.

\bibitem[BC18]{bcomega}
Vance Blankers and Renzo Cavalieri, \emph{Intersections of $\omega$ classes in
  {$\Moduli_{g,n}$}}, Proceedings of G\"{o}kova Geometry--Topology Conference
  2017 (2018), 37--52.

\bibitem[Cav06]{cavalierihodgeint}
Renzo Cavalieri, \emph{Hodge-type integrals on moduli spaces of admissible
  covers}, Geom. Topol. Monogr. \textbf{8} (2006), 167--194.

\bibitem[Cav16]{cavalierihurwitz}
\bysame, \emph{Hurwitz theory and the double ramification cycle}, Jpn. J. Math.
  \textbf{11} (2016), no.~2, 305--331.

\bibitem[CC14]{chencoskun2014}
Dawei Chen and Izzet Coskun, \emph{Extremal effective divisors on
  {$\Moduli_{1,n}$}}, Math. Ann. \textbf{359} (2014), no.~3-4, 891--908.

\bibitem[CC15]{chencoskun2015}
\bysame, \emph{Extremal higher codimension cycles on moduli spaces of curves},
  Proc. London Math. Soc. \textbf{111} (2015), no.~1, 181--204.

\bibitem[CT15]{castravettevelev}
Ana-Maria Castravet and Jenia Tevelev, \emph{$\overline{M}_{0,n}$ is not a
  {Mori} dream space}, Duke Math. J. \textbf{164} (2015), no.~8, 3851--3878.

\bibitem[CT16]{chentarasca}
Dawei Chen and Nicola Tarasca, \emph{Extremality of loci of hyperelliptic
  curves with marked \text{W}eierstrass points}, Algebra {\&} Number Theory
  \textbf{10} (2016), no.~1, 1935--1948.

\bibitem[CT19]{cavalieritarasca2017}
Renzo Cavalieri and Nicola Tarasca, \emph{Classes of weierstrass points on
  genus 2 curves},  Trans. Amer. Math. Soc. \textbf{372} (2019), no.~4, 2467--2492.

\bibitem[Fab]{faberprogram}
Carel Faber, \emph{\textsc{Maple} program for computing hodge integrals}.\\
Available at \href{http://math.stanford.edu/~vakil/programs}{\texttt{http://math.stanford.edu/\textasciitilde vakil/programs}}.

\bibitem[FP05]{faberpandharipande}
Carel Faber and Rahul Pandharipande, \emph{Relative maps and tautological
  classes}, J. Eur. Math. Soc. \textbf{7} (2005), no.~1, 13--49.

\bibitem[GK16]{gonzalezkaru}
Jos\'e~Luis Gonz\'alez and Kalle Karu, \emph{Some non-finitely generated
  \text{C}ox rings}, Compos. Math. \textbf{152} (2016), no.~5, 984--996.

\bibitem[HKL18]{hkl2016}
Jürgen Hausen, Simon Keicher, and Antonio Laface, \emph{On blowing up the
  weighted projective plane}, Mathematische Zeitschrift \textbf{290} (2018),
  1339--1358.

\bibitem[HM82]{harrismumford}
Joe Harris and David Mumford, \emph{On the \text{K}odaira dimension of the
  moduli space of curves}, Invent. Math. \textbf{67} (1982), 23--88.

\bibitem[Kee99]{keel99}
Se\'{a}n Keel, \emph{Basepoint freeness for nef and big line bundles in
  positive characteristic}, Ann. of Math. \textbf{149} (1999), no.~1, 253--286.

\bibitem[Log03]{logan2003}
Adam Logan, \emph{The \text{K}odaira dimension of moduli spaces of curves with
  marked points}, Amer. J. Math. \textbf{125} (2003), no.~1, 105--138.

\bibitem[Mul17]{mullane2017}
Scott Mullane, \emph{{On the effective cone of
  $\overline{\mathcal{M}}_{g,n}$}}, Advances in Mathematics \textbf{320}
  (2017), 500--519.

\bibitem[Opi16]{opie2016}
Morgan Opie, \emph{Extremal divisors on moduli spaces of rational curves with
  marked points}, Michigan Math. J. \textbf{65} (2016), no.~2, 251--285.

\bibitem[Rul01]{rulla01}
William~Frederick Rulla, \emph{The birational geometry of moduli space {M(3)}
  and moduli space {M(2,1)}}, Ph.D. thesis, University of Texas at Austin,
  2001.

\end{thebibliography}
\end{document}